\documentclass[a4paper,12pt,twoside]{amsart}
\usepackage[T1]{fontenc}
\usepackage{vmargin}
\usepackage{amssymb}
 \usepackage{csquotes }
\usepackage{amsmath}
\usepackage{mathrsfs}
\usepackage[dvips]{graphicx}
\usepackage{vmargin}
\usepackage{color}
\usepackage{amscd}
\DeclareGraphicsExtensions{.pdf, .png, .jpg, .mps}
\usepackage[colorlinks=false]{hyperref}
\definecolor{myc}{cmyk}{0.0009,0.8,0.8,0.00}
\newtheorem{theorem}{Theorem}
\newtheorem{lem}[theorem]{Lemma}
\newtheorem{pro}[theorem]{Proposition}

\newtheorem{cor}[theorem]{Corollary}
\newtheorem{oss}[theorem]{Remark}

\newtheorem*{nb}{\footnotesize {N.B}}


\def\p{\partial}

\def\io{{\infty}}

\def\dive{\operatorname{div}}
\def\moo{C^{\io}}
\def\mooc{C^{\io}_{\textit c}}

\def\R{\mathbb R}

\def\poscal#1#2{\langle#1,#2\rangle}

\def\norm#1{\Vert#1\Vert}
\def\val#1{\vert#1\vert}
\def\Val#1{\left\vert#1\right\vert}

\def\l2{L^2(\R^{n})}
\def\L2{L^2(\R^{2n})}

\def\lip{\operatorname{Lip}}

\def\vs{\vskip.3cm}

\def\mat22#1#2#3#4{\begin{pmatrix}#1&#2\\ #3&#4\end{pmatrix}}

\def\XXint#1#2#3{{\setbox0=\hbox{$#1{#2#3}{\int}$}
     \vcenter{\hbox{$#2#3$}}\kern-.5\wd0}}

\def\beq{\begin{equation}}
\def\eeq{\end{equation}}

\begin{document}
\title{Some natural subspaces 	and quotient spaces of $L^1$}
\author{Gilles Godefroy and Nicolas Lerner}
\address{\noindent \textsc{G. Godefroy and N. Lerner, Institut de Math\'ematiques de Jussieu,
Universit\'e Pierre et Marie Curie (Paris VI),
4 Place Jussieu,
75252 Paris cedex 05,
France}}
\email{gilles.godefroy@imj-prg.fr ; nicolas.lerner@imj-prg.fr}
\date{\today}
\begin{abstract}
We show that the space $\text{Lip}_0(\mathbb R^n)$ is the dual space of
$L^{1}({\mathbb R}^{n}; {\mathbb R}^{n})/N$ where $N$ is the subspace of $L^{1}({\mathbb R}^{n}; {\mathbb R}^{n})$
consisting of vector fields whose divergence {vanishes}.
We prove that although the quotient space $L^{1}({\mathbb R}^{n}; {\mathbb R}^{n})/N$ is weakly sequentially
complete, the subspace $N$ is not nicely placed - in other words, its unit ball is not closed
for the topology $\tau_m$ of local convergence in measure. We prove that if $\Omega$ is a bounded
open star-shaped subset of $\mathbb {R}^n$ and $X$ is a closed subspace of $L^1(\Omega)$
consisting of continuous functions, then the unit ball of $X$ is compact for the compact-open topology
on $\Omega$. It follows in particular that such spaces $X$, when they have Grothendieck's approximation property,
have unconditional finite-dimensional decompositions and are isomorphic to weak*-closed subspaces of $l^1$.
Numerous examples are provided where such results apply.
\end{abstract}
\maketitle
\noindent
\section{Introduction}
Among the wealth of important discoveries due to Uffe Haagerup, one can single out what is now universally called Haagerup's approximation property,
a fundamental concept in operator algebras and their various applications. The present work investigates approximation properties on a much lesser scale, 
and the tools we use are familiar to every functional analyst: among them, dilation operators on star-shaped domains and Grothendieck's approximation property.
Our purpose is to analyse some natural subspaces (and quotient spaces) of $L^1$. We are therefore outside the reflexive world, where the lack of compactness
can hurt some proofs and where some natural operators become unbounded. This leads us to weaken the topologies, thus to enter the realm of non-locally convex spaces 
and to use the topology $\tau_m$ of convergence in measure. Such tools will allow us to provide satisfactory results on subspaces of $L^1$ which satisfy
quite weak assumptions: for instance, we show (Corollary 10) that if $\Omega$ is a star-shaped bounded open subset of $\mathbb{R}^n$, if $X$ is a closed subspace of
$L^1(\Omega)$ consisting of continuous functions and stable under the dilation operators  $(T_\rho)$, and if $X$ has Grothendieck's approximation property, 
then $X$ is isomorphic to a weak-star closed subspace of $l^1$. Hence such a space has a
``somewhat discrete''
structure. It turns out that these assumptions are satisfied by 
many classical spaces. Moreover these spaces $X$ have unconditional finite dimensional decompositions. We therefore apply a rule of thumb which has been discovered by Nigel Kalton and some of his co-authors: homogeneity of a Banach space $X$ implies unconditionality on $X$.
\vskip 3 mm
We now outline the content of this note. Let $\Omega$ be an open subset of $\mathbb{R}^n$, equipped with the Lebesgue measure denoted $m$. A closed subspace $X$ of $L^1(\Omega)$ is called nicely placed
if its unit ball is closed for the topology $\tau_m$ of local convergence in measure (see Chapter IV in \cite{HWW} or \cite{JaLe}). It is known that the quotient space $L^1/X$ is $L$-complemented in its bidual (and thus weakly sequentially complete) when $X$ is nicely placed, and the same conclusion holds when we consider integrable functions with values in a finite-dimensional normed space (see e.g. p.200 in \cite{HWW}), in  particular integrable vector fields on $\mathbb{R}^n$. Our first result is somewhat negative: we show that the free space $\mathcal F(\mathbb R^n)$ over $\mathbb{R}^n$ is isometric to the quotient of the space $(L^{1}({\mathbb R}^{n}))^{n}=L^{1}({\mathbb R}^{n}; {\mathbb R}^{n})$ of integrable vector fields on $\mathbb{R}^n$ by the space $N$ of divergence-free vector fields, and we show that although $\mathcal F(\mathbb R^n)$ shares many properties of spaces which are $L$-complemented in their bidual, the space $N$ is {\sl not} nicely placed. This discards a natural conjecture, but leads to several questions. In the second (independent) part of our paper, we show that {if} $\Omega$ is star-shaped and bounded, homogeneous subspaces $X$ of $L^1(\Omega)$ consisting of continuous functions are very special examples of nicely placed subspaces: their unit ball $B_X$ is actually 
$\tau_m$-compact locally convex. This bears strong consequences on the structure of such spaces.
\section{Divergence-free vector fields and the space \texorpdfstring{$\mathcal F(\mathbb R^n)$}{FRn}}
We first provide a representation result for the predual of the space of Lipschitz functions on the space $\mathbb R^n$. 
We recall the usual notation
\begin{equation}\label{lipzero}
\text{Lip}_0(\mathbb R^n)=\{f:\mathbb R^n\rightarrow \mathbb R, \text{such that $f(0)=0$ and }
\sup_{x\not=y}\frac{\vert f(x)-f(y)\vert}{\vert x-y\vert}<+\infty.
\}
\end{equation}
Functions in $\text{Lip}_0(\mathbb R^n)$ are the continuous functions from $\R^{n}$ into $\R$ such that $f(0)=0$ 
with a distribution gradient in $L^{\io}(\R^{n})$.
The vector space $\text{Lip}_0(\mathbb R^n)$ is a Banach space, 
with norm
$$
\norm{f}_{\text{Lip}_{0}(\R^{n})}=\sup_{x\not=y}\frac{\vert f(x)-f(y)\vert}{\vert x-y\vert}=\norm{\nabla f}_{L^{\io}(\R^{n})}.
$$
$\text{Lip}_0(\mathbb R^n)$ is the dual space of the Banach space denoted $\mathcal F(\mathbb R^n)$,
also known as the Lipschitz-free space over $\mathbb R^n$. Let us recall that it has been recently shown by N. Weaver (\cite{Wea})
that the free space over an arbitrary metric space $M$ is strongly unique isometric predual of its dual  $\text{Lip}_0(M)$.
In particular, any Banach space whose dual is isometric to $\text{Lip}_0(\mathbb R^n)$ coincide with $\mathcal F(\mathbb R^n)$.
Following \cite{Ler}, we represent the space $\text{Lip}_0(\mathbb R^n)$ as a 
{closed subspace of $(L^{\io}(\R^{n}))^{n}=L^{\io}(\R^{n};\R^{n})$ (in fact the closed $L^{\io}$ currents),}
and then we check that this closed subspace is exactly the orthogonal space to a subspace $N$ of the predual $L^{1}({\mathbb R}^{n}; {\mathbb R}^{n})$.
Thus the free space is identical with the quotient space $L^{1}({\mathbb R}^{n}; {\mathbb R}^{n})/ N$.
This approach relies on de Rham's theorem on closed currents and an integration by parts. However, some technicalities are needed since derivatives must be taken
in the distribution sense (see Remark 6). It should be noted that free spaces over convex open subsets of $\mathbb R^n$ are similarly represented in the recent work
\cite{CKK}, without using Weaver's work - which requests  a slightly different approach.
\par
We begin with two simple lemmas.
\begin{lem}\label{l11}
 Let $X=L^{1}({\mathbb R}^{n}; {\mathbb R}^{n})$ be the Banach space of integrable vector fields
and let $N$ be the subspace of $X$ made of vector fields with null distribution divergence:
\begin{equation}\label{22}
N=\{(f_{j})_{1\le j\le n}\in X, \ \sum_{1\le j\le n}\frac{\partial  f_{j}}{\partial x_{j}}=0\}.
\end{equation}
Then $N$ is a closed subspace of $X$.
\end{lem}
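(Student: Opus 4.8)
The plan is to realize $N$ as an intersection of kernels of continuous linear functionals on $X$, which is automatically a closed subspace. First I would fix a test function $\varphi\in\mooc(\R^{n})$ and introduce the linear form $\ell_{\varphi}\colon X\to\R$ given by
\[
\ell_{\varphi}(f)=\sum_{1\le j\le n}\int_{\R^{n}}f_{j}\,\p_{j}\varphi\,dx,\qquad f=(f_{j})_{1\le j\le n}\in X.
\]
Since each $\p_{j}\varphi$ lies in $L^{\io}(\R^{n})$, Hölder's inequality yields $\val{\ell_{\varphi}(f)}\le\norm{\nabla\varphi}_{L^{\io}(\R^{n})}\norm{f}_{X}$, so $\ell_{\varphi}$ is a continuous linear form on $X$. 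By the very definition of the distributional divergence, a vector field $f\in X$ belongs to $N$ precisely when $\poscal{\dive f}{\varphi}=-\ell_{\varphi}(f)=0$ for every $\varphi\in\mooc(\R^{n})$; hence
\[
N=\bigcap_{\varphi\in\mooc(\R^{n})}\ker\ell_{\varphi}.
\]
Each $\ker\ell_{\varphi}$ is a closed linear subspace of $X$ because $\ell_{\varphi}$ is continuous, and an arbitrary intersection of closed subspaces is closed; this gives the conclusion.

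Equivalently, one may run the same argument in sequential form: if $f^{(k)}\to f$ in $X$ with $f^{(k)}\in N$, then for each $\varphi\in\mooc(\R^{n})$,
\[
\poscal{\dive f}{\varphi}=-\lim_{k\to\io}\ell_{\varphi}\bigl(f^{(k)}\bigr)=\lim_{k\to\io}\poscal{\dive f^{(k)}}{\varphi}=0,
\]
so $\dive f=0$ and $f\in N$. There is no real analytic obstacle here: the only point worth emphasizing — and the reason the divergence must be read in the distribution sense — is that convergence in $L^{1}(\R^{n})$ implies convergence in $\mathcal D'(\R^{n})$, on which differentiation is continuous, so the distributional derivatives of $f^{(k)}$ converge to those of $f$. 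The statement is thus a soft consequence of these facts, and I would expect the formal write-up to take only a few lines.
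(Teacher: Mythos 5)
Your proof is correct and rests on the same idea as the paper's: pairing the divergence against a fixed test function gives a functional on $X$ bounded by $\norm{\nabla\varphi}_{L^{\io}}\norm{f}_{X}$, so membership in $N$ passes to $L^{1}$-limits; the paper writes this as the sequential argument you give in your second paragraph, while your kernel-intersection phrasing is just a repackaging of it.
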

\begin{nb}
 It is convenient to note the elements $F=(f_{j})_{1\le j\le n}\in L^{1}({\mathbb R}^{n}; {\mathbb R}^{n})$ as vector fields
 $$
 F=\sum_{1\le j\le n} f_{j}\frac{\p}{\p x_{j}}.
 $$
 The distribution divergence of $F$ is then defined by $\dive F=\sum_{1\le j\le n}\frac{\partial  f_{j}}{\partial x_{j}}.$
\end{nb}
\begin{proof}
 Let $\bigl(F_{k}=\sum_{1\le j\le n} f_{k,j}\p_{j}\bigr)_{k\ge 1}$ be a sequence of vector fields of $N$, converging in $X$ with limit $F=\sum_{1\le j\le n} f_{j}\p_{j}$
 (this means that for all $j\in \{1,\dots, n\}$, $\lim_{k}f_{j,k}=f_{j}$ in $L^{1}(\R^{n})$).
 Let $\phi\in\mooc(\R^{n})$: we have
 $$\poscal{\frac{\p f_{k,j}}{\p{x_{j}}}}{\phi}_{\mathscr D'(\R^{n}), \mathscr D(\R^{n})}=-
 \poscal{f_{k,j}}{\frac{\p\phi}{\p x_{j}}}_{\mathscr D'(\R^{n}), \mathscr D(\R^{n})}=-\int_{\R^{n}}
 f_{k,j}(x)\frac{\p\phi}{\p x_{j}}(x)dx,
 $$
 and consequently
 $$
 \lim_{k}\poscal{\frac{\p f_{k,j}}{\p{x_{j}}}}{\phi}_{\mathscr D'(\R^{n}), \mathscr D(\R^{n})}
 =-\int_{\R^{n}}f_{j}(x)\frac{\p\phi}{\p x_{j}}(x)dx=\poscal{\frac{\p f_{j}}{\p{x_{j}}}}{\phi}_{\mathscr D'(\R^{n}), \mathscr D(\R^{n})},
 $$
 which implies
 $
 0= \lim_{k}\poscal{\underbrace{\sum_{1\le j\le n}\frac{\p f_{k,j}}{\p{x_{j}}}}_{=0}}{\phi}_{\mathscr D'(\R^{n}), \mathscr D(\R^{n})}
 =\poscal{\sum_{1\le j\le n}\frac{\p f_{j}}{\p{x_{j}}}}{\phi}_{\mathscr D'(\R^{n}), \mathscr D(\R^{n})},
 $
 and thus
 $
 \dive F=0,
 $
  proving the sought result.
\end{proof}
\begin{lem}\label{lem2}
 The space $\lip_{0}(\R^{n})$ is isomorphic to the closed $L^{\io}(\R^{n})$ currents,
 i.e.
 to the subspace 
 \begin{equation}\label{currents}
\mathcal C_n=\{(u_{j})_{1\le j\le n}\in (L^{\io}(\R^{n}))^{n},\text{ such that }
 \frac{\p u_{j}}{\p x_{k}}= \frac{\p u_{k}}{\p x_{j}}\quad \text{for  $1\le j<k\le n$}
 \}.
\end{equation}
 More precisely, the mapping
 $$
 \lip_{0}(\R^{n})\ni a\mapsto da\in\mathcal C_n,
 $$
is an isomorphism of Banach spaces.
\end{lem}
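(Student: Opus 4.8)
The plan is to verify in turn that $a\mapsto da$ maps $\lip_0(\R^n)$ into $\mathcal C_n$, that it is injective, and that it is onto, the two-sided norm estimate falling out along the way. For the first point, if $a\in\lip_0(\R^n)$ then, by the characterisation recalled above, its distributional gradient $\nabla a=(\p_1a,\dots,\p_na)$ belongs to $L^\io(\R^n;\R^n)$ with $\norm{\nabla a}_{L^\io}=\norm a_{\lip_0(\R^n)}$; writing $u_j=\p_ja$, commutation of distributional derivatives gives $\p_ku_j=\p_k\p_ja=\p_j\p_ka=\p_ju_k$ in $\mathscr D'(\R^n)$, so $da\in\mathcal C_n$. (Here $\mathcal C_n$ is itself norm-closed in $(L^\io(\R^n))^n$, by the argument of Lemma~\ref{l11}.) Injectivity is immediate: $da=0$ forces $\nabla a=0$, hence $a$ constant on the connected set $\R^n$, hence $a\equiv 0$ because $a(0)=0$. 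Finally $\norm{u_j}_{L^\io}=\norm{\p_ja}_{L^\io}\le\norm{\nabla a}_{L^\io}=\norm a_{\lip_0(\R^n)}$, so for the fibrewise Euclidean supremum norm on $(L^\io(\R^n))^n$ the map is in fact an \emph{isometry} onto its range, in particular an isomorphism of Banach spaces as claimed.

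The substance is therefore surjectivity onto $\mathcal C_n$, for which I would argue by mollification. Fix a standard mollifier $\rho_\ep\ge 0$, $\int\rho_\ep=1$, and given $u=(u_j)\in\mathcal C_n$ put $u^\ep=u*\rho_\ep$. Then $u^\ep$ is smooth, satisfies the same closedness relations $\p_ku_j^\ep=\p_ju_k^\ep$ (convolution commutes with differentiation), obeys $\norm{u^\ep}_{L^\io}\le\norm u_{L^\io}$, and converges to $u$ in $L^1_{\text{loc}}(\R^n)$, hence in $\mathscr D'(\R^n)$. Because $\R^n$ is star-shaped about the origin, the classical (smooth) Poincar\'e lemma yields $a^\ep\in\moo(\R^n)$ with $da^\ep=u^\ep$ and $a^\ep(0)=0$, namely $a^\ep(x)=\int_0^1\poscal x{u^\ep(tx)}\,dt$. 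From $\nabla a^\ep=u^\ep$ and $\norm{u^\ep}_{L^\io}\le\norm u_{L^\io}$ one gets $\val{a^\ep(x)-a^\ep(y)}\le\norm u_{L^\io}\val{x-y}$ for all $x,y$: the family $(a^\ep)_{\ep>0}$ is equi-Lipschitz and vanishes at the origin, so by Arzel\`a--Ascoli some sequence $a^{\ep_k}$ converges locally uniformly to a function $a$ which is Lipschitz with constant $\le\norm u_{L^\io}$ and has $a(0)=0$. Passing to the limit in $\mathscr D'(\R^n)$ in the identity $\nabla a^{\ep_k}=u^{\ep_k}$ (the left side converges because $a^{\ep_k}\to a$ in $\mathscr D'(\R^n)$, the right side because $u^{\ep_k}\to u$ in $\mathscr D'(\R^n)$) gives $\nabla a=u$; thus $a\in\lip_0(\R^n)$ and $da=u$, which finishes the proof.

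The only genuinely delicate point is the one flagged in Remark~6: every relation above---$\p_ku_j=\p_ju_k$, $\nabla a=u$, and the passages to the limit---must be read in the distribution sense, which is precisely why the smooth Poincar\'e lemma cannot be applied to $u\in\mathcal C_n$ itself and the mollification layer is inserted: it converts the distributional closedness hypothesis into a family of honest smooth closed forms, after which the uniform Lipschitz bound and Arzel\`a--Ascoli carry the conclusion back to the original current. Alternatively, one may invoke de Rham's theorem on closed currents on the contractible manifold $\R^n$ directly, obtaining a distribution $a$ with $da=u$ and then noting a posteriori that $\nabla a\in L^\io(\R^n;\R^n)$ forces $a$ to coincide a.e.\ with a Lipschitz function, normalised by subtracting $a(0)$; the mollification argument sketched here is simply a self-contained avatar of that route, together with the integration by parts implicit in the weak formulation.
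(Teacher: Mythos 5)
Your proof is correct, and the easy parts (well-definedness via commutation of distributional derivatives, the isometric norm identity, injectivity from $a(0)=0$) match the paper. Where you genuinely diverge is the surjectivity step: the paper invokes de Rham's theorem on closed currents to produce a distribution $w$ with $dw=u$, then observes that $\nabla w\in L^{\io}\subset L^{p}_{loc}$ for all $p$ and uses the Sobolev embedding with $p>n$ to conclude that $w$ is (H\"older) continuous, finally setting $a=w-w(0)$; you instead mollify $u$, apply the smooth Poincar\'e homotopy formula $a^{\ep}(x)=\int_0^1\langle x,u^{\ep}(tx)\rangle\,dt$ (legitimate, since convolution preserves the closedness relations and $\R^{n}$ is star-shaped), and recover the primitive of $u$ by Arzel\`a--Ascoli from the equi-Lipschitz family $(a^{\ep})$, passing to the limit in $\mathscr D'$. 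Your route is more elementary and self-contained: it avoids citing de Rham's theorem and the Sobolev embedding altogether, and it delivers the Lipschitz bound $\norm{a}_{\lip_0}\le\norm{u}_{L^{\io}}$ directly from the construction rather than a posteriori. The paper's route is shorter once de Rham's theorem is taken as a black box, and the same regularization-plus-cutoff machinery reappears anyway in its Lemma \ref{lem4}, so little is lost either way; your closing remark correctly identifies the paper's argument as the alternative you sketch.
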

\begin{nb}
 As in Lemma \ref{l11}, we can prove that $\mathcal C_n$ is a closed subspace of the Banach space 
 $(L^{\io}(\R^{n}))^{n}$.
 All the derivatives are taken in the distribution sense.
It is convenient to note the elements of $\mathcal C_n$ as
$u=\sum_{1\le j\le n} u_{j}dx_{j}$, so that for $a\in \lip_{0}(\R^{n})$,
we have
$$
da=\sum_{1\le j\le n}\frac{\p a}{\p x_{j}}dx_{j}.
$$ 
\end{nb}
\begin{proof}
From the definition of  $\lip_{0}(\R^{n})$, we see that $da$ is a $L^{\io}(\R^{n})$ current and also that $da$ is closed since, in the distribution sense, we have 
$$
\frac{\p^{2} a}{\p x_{j}\p x_{k}}=\frac{\p^{2} a}{\p x_{k}\p x_{j}},
$$ 
 meaning that the linear mapping given in the lemma is well-defined from 
 $\lip_{0}(\R^{n})$ into $ \mathcal C_n$.
 This mapping is also  isometric (and thus one-to-one)
 since 
 $$
 \norm{da}_{\mathcal C_n}=\norm{\nabla a}_{L^{\io}(\R^{n})}=\norm{a}_{\lip_{0}(\R^{n})}.
 $$
For concluding the proof, we need only to  prove that this mapping is onto:
in fact thanks to de Rham's theorem on closed currents
(see \cite{MR0346830} or \cite{MR0205028}), if $u\in \mathcal C_n$, there exists a distribution $w$ on $\R^{n}$ such that
$$
dw=u.
$$ 
As a result, the distribution $w$ has a gradient in $L^{p}_{loc}$ for any $p\in(1,+\io)$
and the Sobolev embedding theorem implies that
(taking $p>n$)   $w$ is a (H\"older)
continuous function. We can take now
$$
a(x) =w(x)-w(0),
$$
and we find that $a$ belongs to  $\lip_{0}(\R^{n})$ and satisfies
$da=u$.
  \end{proof}
 We now state and prove a representation result for $\mathcal F(\mathbb R^n)$.
\begin{pro}\label{5599}
Let $X=L^{1}({\mathbb R}^{n}; {\mathbb R}^{n})$ be the Banach space of integrable vector fields
and let $N$ be the closed subspace of $X$ made of vector fields with null distribution divergence as defined by \eqref{22}.
 Then, the free space $\mathcal F(\mathbb R^n)$ over $\mathbb R^n$ is isometric to $X/N$ and we have 
 $$
 \text{Lip}_0(\mathbb R^n)=(X/N)^{*}.
 $$
\end{pro}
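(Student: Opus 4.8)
The plan is to realize \(\mathcal F(\R^{n})=X/N\) through the natural duality \(\langle F,u\rangle=\sum_{1\le j\le n}\int_{\R^{n}}f_{j}u_{j}\,dx\) between \(X=L^{1}(\R^{n};\R^{n})\) and \(X^{*}=L^{\io}(\R^{n};\R^{n})\), by identifying the annihilator of \(N\) with the space \(\mathcal C_n\) of closed \(L^{\io}\) currents of \eqref{currents}, and then invoking Lemma \ref{lem2} together with Weaver's uniqueness theorem. Since \(N\) is closed (Lemma \ref{l11}), the canonical map yields an isometric identification \((X/N)^{*}=N^{\perp}\), where \(N^{\perp}=\{u\in L^{\io}(\R^{n};\R^{n}):\langle F,u\rangle=0\ \text{for all}\ F\in N\}\) carries the \(L^{\io}\)-norm, which is also the norm of \(\mathcal C_n\). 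Everything thus reduces to the identity \(N^{\perp}=\mathcal C_n\), after which Lemma \ref{lem2} gives \((X/N)^{*}=\lip_{0}(\R^{n})\) isometrically and Weaver's result forces \(X/N\) to be (isometric to) \(\mathcal F(\R^{n})\).

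The inclusion \(N^{\perp}\subseteq\mathcal C_n\) is elementary. Given \(u=(u_{j})\in N^{\perp}\) and indices \(1\le j<k\le n\), apply the annihilation condition to the compactly supported divergence-free vector fields \(F_{\phi}\in\mooc(\R^{n};\R^{n})\subset N\) whose \(j\)-th component is \(\p_{k}\phi\), whose \(k\)-th component is \(-\p_{j}\phi\), and whose remaining components vanish, where \(\phi\) runs over \(\mooc(\R^{n})\); the equality \(\langle F_{\phi},u\rangle=0\) then reads \(\langle\p_{j}u_{k}-\p_{k}u_{j},\phi\rangle=0\) for every such \(\phi\), hence \(u\in\mathcal C_n\).

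For the reverse inclusion \(\mathcal C_n\subseteq N^{\perp}\), which is the technical core, let \(u\in\mathcal C_n\); by Lemma \ref{lem2} write \(u=da\) with \(a\in\lip_{0}(\R^{n})\), and let \(F=(f_{j})\in N\). Fix \(\chi\in\mooc(\R^{n})\) equal to \(1\) near the origin, set \(\chi_{R}(x)=\chi(x/R)\), and mollify \(a_{\delta}=a*\rho_{\delta}\in\moo(\R^{n})\), so that \(\chi_{R}a_{\delta}\) is a genuine test function. Pairing \(\dive F=0\) with \(\chi_{R}a_{\delta}\) and expanding \(\p_{j}(\chi_{R}a_{\delta})=\chi_{R}\p_{j}a_{\delta}+(\p_{j}\chi_{R})a_{\delta}\) gives
\[
\sum_{1\le j\le n}\int_{\R^{n}}\chi_{R}\,f_{j}\,\p_{j}a_{\delta}\,dx=-\sum_{1\le j\le n}\int_{\R^{n}}f_{j}\,(\p_{j}\chi_{R})\,a_{\delta}\,dx .
\]
Letting \(\delta\to0\) keeps this identity valid with \(a_{\delta}\) replaced by \(a\): indeed \(\p_{j}a_{\delta}\to\p_{j}a\) weak-\(*\) in \(L^{\io}\) while \(\chi_{R}f_{j}\in L^{1}\), and \(a_{\delta}\to a\) locally uniformly while \((\p_{j}\chi_{R})f_{j}\in L^{1}\) has compact support. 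Finally let \(R\to+\io\): the left-hand side tends to \(\langle F,u\rangle\) by dominated convergence, whereas the right-hand side is controlled, using \(|a(x)|\le\norm{a}_{\lip_{0}(\R^{n})}|x|\) (this is exactly where \(a(0)=0\) enters), \(|\p_{j}\chi_{R}|\le CR^{-1}\) supported in a region \(\{|x|\sim R\}\), and \(f_{j}\in L^{1}\), by a bound of the form \(C\norm{a}_{\lip_{0}(\R^{n})}\sum_{j}\int_{\{|x|\ge R\}}|f_{j}|\,dx\), which tends to \(0\). Hence \(\langle F,u\rangle=0\), i.e. \(u\in N^{\perp}\).

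Combining \(N^{\perp}=\mathcal C_n\) with the isometric isomorphism \(\lip_{0}(\R^{n})\simeq\mathcal C_n\) of Lemma \ref{lem2}, we obtain \((X/N)^{*}=\lip_{0}(\R^{n})\) isometrically; since (by Weaver) \(\mathcal F(\R^{n})\) is the unique isometric predual of \(\lip_{0}(\R^{n})\), the space \(X/N\) is isometric to \(\mathcal F(\R^{n})\), which proves the Proposition. The only real obstacle is the limit ``at infinity'' in the last step: the components \(f_{j}\) of a divergence-free \(L^{1}\) field need not lie in \(W^{1,1}_{\text{loc}}\), so no naive integration by parts is available and the cutoff-plus-mollification device is needed; moreover the Lipschitz potential \(a\) has only linear growth, so the normalization \(a(0)=0\) and the \(L^{1}\)-decay of \(F\) must be used together to annihilate the boundary contribution — this is the point alluded to in Remark 6.
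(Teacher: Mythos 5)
Your proof is correct and follows essentially the same route as the paper: the hard step (that closed currents annihilate $N$, proved by mollifying $a$, cutting off at scale $R$, and killing the boundary term via $a(0)=0$, the Lipschitz bound and the $L^1$-decay of $F$), the easy step (testing $u\in N^{\perp}$ against compactly supported divergence-free fields built from $\p_j\varphi,\p_k\varphi$), Lemma \ref{lem2} (de Rham) and Weaver's uniqueness theorem are exactly the paper's ingredients. Your packaging through the canonical isometry $(X/N)^{*}=N^{\perp}$ (which absorbs the paper's separate injectivity lemma and makes the isometric statement explicit), and your swapped order of limits ($\delta\to0$ at fixed $R$, then $R\to\infty$), are only minor streamlinings of the same argument.
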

\begin{proof}
Note that it suffices to prove the last equation $\text{Lip}_0(\mathbb R^n)=(X/N)^{*}$ since by Weaver's result
the isometric predual is unique.
 The case $n=1$ is easy since, in that case $N=\{0\}$, so that $X/N=L^{1}(\R)$; thanks to Lemma \ref{lem2},
 we have also
 $
 \text{Lip}_0(\mathbb R)=\mathcal C_{1}=L^{\io}(\R),
 $
 proving our claim which reduces to $\bigl(L^{1}(\R)\bigr)^{*}=L^{\io}(\R)$.
Let us now assume that $n\ge 2$. 
We start with a lemma.
\begin{lem}\label{lem4}
 We define
 $$
X\times \lip_{0}(\R^{n})\ni (f,a)\mapsto \Phi(f,a)=\int_{\R^{n}} \sum_{1\le j\le n} f_{j}\frac{\p a}{\p x_{j}}dx\in \R.
$$
The mapping $\Phi$ is bilinear continuous.
Moreover
for $a\in \lip_{0}(\R^{n})$ and   $f\in N$ (given by \eqref{22}),we have  $\Phi(f,a)=0$.
\end{lem}
\begin{proof}[Proof of the lemma]
The bilinearity and continuity of $\Phi$
are obvious.
Let $\rho\in \mooc(\R^{n};\R_{+})$, supported in the unit ball, even with integral 1; we set  for $\epsilon>0$$,
\rho_{\epsilon}(x)=\epsilon^{-n}\rho(x/\epsilon)$ and we 
define
$$
a_{\epsilon}(x)=(a\ast \rho_{\epsilon})(x)=\int a(y) \rho_{\epsilon}(x-y) dy.
$$
We note that $a\in \moo$ and  $da_{\epsilon}=da\ast \rho_{\epsilon}$, which is thus bounded in  $(L^{\io}(\R^{n}))^{n}$
by  $\norm{da}_{L^{\io}(\R^{n})}$ and 
converges a.e. towards  $da$,
thanks to Lebesgue differentiation Theorem\footnote{For $u\in L^{\io}(\R^{n})$,
we have
$\val{(u\ast \rho_{\epsilon})(x)-u(x)}=$
$$
\hskip55pt=\Val{\int\bigl(u(y)-u(x)\bigr)\rho_{\epsilon}(x-y) dy}
\le\underbrace{\frac{1}{\epsilon^{n}\val{\mathbb B^{n}}} \int_{\val{y-x}\le \epsilon}\val{u(y)-u(x)} dy}_{
\substack{\rightarrow 0,\ \text{a.e. in $x$}\\\text{(Lebesgue's differentiation theorem)}}}
\val{\mathbb B^{n}}\norm{\rho}_{L^{\io}(\R^{n})}.
$$}
(of course, no convergence in $L^{\io}$ is expected).
We have thus 
\begin{multline}\label{963}
\int_{\R^{n}} \sum_{1\le j\le n} f_{j}\frac{\p a}{\p x_{j}}dx=\lim_{\epsilon}
\int_{\R^{n}} \sum_{1\le j\le n} f_{j}\frac{\p a_{\epsilon}}{\p x_{j}}dx\\
=\lim_{\epsilon}\Bigl(\lim_{k\rightarrow+\io}
\int_{\R^{n}} \sum_{1\le j\le n} f_{j}(x)\frac{\p a_{\epsilon}}{\p x_{j}}(x) \chi_{0}(x/k)dx
\Bigr),
\end{multline}
where  $\chi_{0}$  is a  $\mooc$ function, valued in $[0,1]$, equal to  1 on $\val x\le 1/2$ and supported in $\val x\le 1$.
We note that 
\begin{multline}\label{964}
\int f_{j}(x)\frac{\p a_{\epsilon}}{\p x_{j}}(x) \chi_{0}(x/k)dx
=\poscal{f_{j}(x)}{\chi_{0}(x/k) \frac{\p a_{\epsilon}}{\p x_{j}}(x) }_{\mathscr D',\mathscr D}\\
=\poscal{f_{j}(x)}{ \frac{\p}{\p x_{j}}
\bigl\{a_{\epsilon}(x)\chi_{0}(x/k) \bigr\}}_{\mathscr D',\mathscr D}
-\poscal{f_{j}(x)}{ 
a_{\epsilon}(x)(\p_{j}\chi_{0})(x/k)k^{-1}
}_{\mathscr D',\mathscr D}
\\
=-\poscal{\frac{\p f_{j}}{\p x_{j}}(x)}{ 
a_{\epsilon}(x)\chi_{0}(x/k)}_{\mathscr D',\mathscr D}
-\int a_{\epsilon}(x)
f_{j}(x)(\p_{j}\chi_{0})(x/k)k^{-1} dx,
\end{multline}
and since $f\in N$,
we find
\begin{align}\label{aazz}
&\int_{\R^{n}} \sum_{1\le j\le n} f_{j}\frac{\p a}{\p x_{j}}dx=
-\lim_{\epsilon}\Bigl(\lim_{k\rightarrow+\io}
\int_{\R^{n}} 
a_{\epsilon}(x)
\bigl(\sum_{1\le j\le n}
f_{j}(x)(\p_{j}\chi_{0})(x/k)k^{-1}\bigr)
dx
\Bigr).
\end{align}
On the other hand,
the term $(\p_{j}\chi_{0})(x/k)$ is vanishing outside of $\{x, k/2<\val x< k\}$,
so that
\begin{multline}\label{7788}
\Val{\int_{\R^{n}}
a_{\epsilon}(x)
f_{j}(x)(\p_{j}\chi_{0})(x/k)k^{-1} dx}
\\\le 
\int_{\frac k2\le \val x\le k}
\val{a_{\epsilon}(x)-a_{\epsilon}(0)}
\val{f_{j}(x)}dx\ k^{-1} \norm{\p_{j}\chi_{0}}_{L^{\io}(\R^{n})}
\\
+\int_{\R^{n}}
\val{a_{\epsilon}(0)}
\val{f_{j}(x)}dx\ k^{-1} \norm{\p_{j}\chi_{0}}_{L^{\io}(\R^{n})}.
\end{multline}
Since $\norm{da_{\epsilon}}_{L^{\io}(\R^{n})}\le \norm{da}_{L^{\io}(\R^{n})}=L<+\io$, we obtain
$$
\val{a_{\epsilon}(x)-a_{\epsilon}(0)}\le L\val x,
$$
so  that the first term in the right-hand side of \eqref{7788} is bounded above by
$$
\int_{\frac k2\le \val x\le k}
L\val x
\val{f_{j}(x)}dx\ k^{-1} \norm{\p_{j}\chi_{0}}_{L^{\io}(\R^{n})}\le \int_{\val x\ge k/2}\val{f_{j}(x)}
dx
L\norm{\p_{j}\chi_{0}}_{L^{\io}(\R^{n})},
$$
which is independent of $\epsilon$ and goes to 0 when $k$ goes to $+\io$ since each $f_{j}$ belongs to $L^{1}(\R^{n})$.
Moreover, 
we have $a(0)=0$ and thus 
$$a_{\epsilon}(0)=\int \bigr (a(y)-a(0)\bigl)\rho(-y/\epsilon)\epsilon^{-n}dy,$$
so that 
 $$
 \val{a_{\epsilon}(0)}\le L\int \val y \rho(y/\epsilon)\epsilon^{-n}dy\le \epsilon C_{0}, \quad C_{0}=\int \val z \rho(z) dz,
 $$
and we obtain that 
 the second term in the right-hand side of \eqref{7788} is bounded above by
 $$
 \int_{\R^{n}}
\epsilon C_{0}
\val{f_{j}(x)}dx\ k^{-1} \norm{\p_{j}\chi_{0}}_{L^{\io}(\R^{n})},
 $$
 which goes to 0 when $k\rightarrow+\io$
  since each $f_{j}$ belongs to $L^{1}(\R^{n})$.
  Finally the right-hand side of \eqref{7788} goes to 0 when $k\rightarrow+\io$
  and this implies that the left-hand side of \eqref{aazz} is zero, which is the sought result.
This lemma implies that the mapping $\tilde\Phi$ defined on $X/N\times 
 \lip_{0}(\R^{n})$ 
 by
$$
\tilde\Phi\bigl(p(f),a\bigr)= \Phi(f,a),
$$
where $p:X\rightarrow X/N$ is the canonical surjection,
is well-defined and is a continuous bilinear mapping.
\end{proof}
$\bullet$ Going back to the proof of Theorem \ref{5599}, we see that $\tilde\Phi$ induces a continuous linear mapping
$\mathcal L$
from 
$ \lip_{0}(\R^{n})$
into 
$
(X/N)^{*}
$
defined
by
$$\lip_{0}(\R^{n})\ni a\mapsto \mathcal L(a)\in(X/N)^{*},\quad
(\mathcal L(a))(p(f))=\Phi(f,a).
$$
We check first that $\mathcal L$ is one-to-one.
\begin{lem}
 Let $a\in \lip_{0}(\R^{n})$ such that for all $f\in X$, $\Phi(f,a)=0$.
 Then we have $a=0$.
\end{lem}
\begin{proof}[Proof of the lemma]
Let $\chi_{k}\in \mooc(\R^{n}; \R_{+}), \chi_{k}=1$ on $\val x\le k$.
We have for $f=(\chi_{k} \frac{\p a}{\p x_{j}})_{1\le j\le n}$ (which belongs to $X$)
$$0=\Phi(f,a)=
\int_{\R^{n}}\chi_{k}(x)\sum_{1\le j\le n} \bigl(\frac{\p a}{\p x_{j}}(x)\bigr)^{2} dx,
$$
which implies that $da=0$ on $\val x\le k$ for any $k$ and thus $da=0$, inducing $a=0$.
\end{proof}
$\bullet$ Finally, let us prove that $\mathcal L$ is onto.
Let $\xi\in (X/N)^{*}$; since $\xi\circ p\in X^{*}=\bigl(L^{\io}(\R^{n})\bigr)^{n}$,
we find
$(u_{j})_{1\le j\le n}\in \bigl(L^{\io}(\R^{n})\bigr)^{n}$ such that 
$$
\poscal{\xi}{p(f)}_{(X/N)^{*},X/N}=\int \sum_{1\le j\le n}u_{j} f_{j} dx,\qquad \forall f\in N,\
 \int \sum_{1\le j\le n} f_{j}u_{j} dx=0.
$$
 Let $j, k$ be given in $\{1,\dots, n\}$. We have 
 $$
 \poscal{\frac{\p u_{j}}{\p x_{k}}-\frac{\p u_{k}}{\p x_{j}}}{\varphi}_{\mathscr D',\mathscr D}=
 \poscal{u_{j}}{-\frac{\p \varphi}{\p x_{k}}}_{\mathscr D',\mathscr D}
 + \poscal{u_{k}}{\frac{\p \varphi}{\p x_{j}}}_{\mathscr D',\mathscr D}
 =\int\bigl(u_{k}f_{k}+u_{j}f_{j}\bigr) dx,
 $$
with 
 $
 f_{j}=-\frac{\p \varphi}{\p x_{k}},\quad f_{k}=\frac{\p \varphi}{\p x_{j}}.
 $
 The vector field   $(f_{j}\p_{j}+f_{k}\p_{k})$ is  $L^{1}$  with null divergence and we get 
 \[\frac{\p u_{j}}{\p x_{k}}-\frac{\p u_{k}}{\p x_{j}}=0,
 \]
 proving that
the current
$
u=\sum_{1\le j\le n} u_{j}dx_{j}
$
is closed and thus belongs to $\mathcal C_{n}$ (see \eqref{currents}).
Lemma \ref{lem2}
implies that there exists $a\in \lip_{0}(\R^{n})$ such that $da=u$,
proving that $\mathcal L$ is onto.
\end{proof}
\begin{oss}\rm 
 The proof of Lemma \ref{lem4} is giving a little bit more than the statement of this lemma: in fact
 Formula \eqref{964} holds without the assumption $f\in N$ and we obtain from the sequel of the proof that, for $(f,a)\in X\times  \lip_{0}(\R^{n})$, and $\rho, \chi_{0}$ as in Lemma \ref{lem4},
 $$
 \int_{\R^{n}} \sum_{1\le j\le n} f_{j}\frac{\p a}{\p x_{j}}dx=
 -\lim_{\epsilon\rightarrow 0}\Bigl(
 \lim_{k\rightarrow +\io}
 \poscal{\sum_{1\le j\le n}\frac{\p  f_{j}}{\p x_{j}}}{(a\ast \rho_{\epsilon})(x) \chi_{0}(x/k)}_{\mathscr D',\mathscr D}\Bigr),
 $$
 a formula which can be written for the $L^{1}(\R^{n})$ vector field $F=\sum_{1\le j\le n}f_{j}\p_{x_{j}}$
 as
 \begin{equation}\label{last}
\int_{\R^{n}}F(a) dx
=
 -\lim_{\epsilon\rightarrow 0}\Bigl(
 \lim_{k\rightarrow +\io}
 \poscal{\dive F}{(a\ast \rho_{\epsilon})(x) \chi_{0}(x/k)}_{\mathscr D',\mathscr D}\Bigr).
\end{equation}
When $a$ belongs to $C^{1}_{c}(\R^{n})$, the above formula follows from a standard integration by parts and the right-hand side of \eqref{last} is 
$
 -\poscal{\dive F}{a}_{\mathscr D'^{{(1)}},C^{1}_{c}}
 $,
although in the more general case tackled here, we have to pay attention to the fact that $\dive F$ could be a distribution of order 1
which is not defined a priori on Lipschitz continuous functions.
\end{oss}
Note that for $n=2$, we have
$$
\mathcal F(\R^{2})= L^{1}(\R^{2};\R^{2})/\bigl(\nabla^{\perp}L^{2}(\R^{2})\cap L^{1}(\R^{2};\R^{2})\bigr),
$$
where $\nabla^{\perp}$ denotes the orthogonal gradient
{defined by $\nabla^{\perp}\psi=
(\p_{x_{2}}\psi,-\p_{x_{1}}\psi)$}, and for $n=3$,
$$
\mathcal F(\R^{3})= L^{1}(\R^{3};\R^{3})/\bigl(
\text{curl } L^{3/2}(\R^{3};\R^{3})
\cap 
L^{1}(\R^{3};\R^{3})
\bigr).
$$
\vs
As shown in \cite{CDW}, the Lipschitz-free space $\text{Lip}_0(\mathbb R^n)$ over $\mathbb R^n$ is weakly
sequentially complete. Since it is now represented as a quotient space of $L^1$, it is natural to wonder whether
the kernel $N$ of the quotient map is ``nicely placed" (see \cite{GKL}), in other words if its unit ball is closed in $L^1$
for the topology $\tau_m$ of local convergence in measure. Indeed, the quotient of $L^1$ by any such space enjoys a strong form
of weak sequential completeness (\cite{Pfitzner}). But one has:
\begin{pro}\label{p7}
 Let $n>1$, let $X=L^{1}({\mathbb R}^{n}; {\mathbb R}^{n})$ be the Banach space of integrable vector fields
and let $N$ be the subspace of $X$ of vector fields with null distribution divergence:
$$
N=\{(f_{j})_{1\le j\le n}\in X, \ \sum_{1\le j\le n}\frac{\partial  f_{j}}{\partial x_{j}}=0\}.
$$
Then $N$ is not nicely placed, that is, its unit ball is not closed for the topology $\tau_m$ of local convergence in measure.
\end{pro}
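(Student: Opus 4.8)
The plan is to exhibit a sequence $(F_{k})$ in the unit ball of $N$ converging, for the topology $\tau_m$, to a field $F$ with $\norm{F}_{L^{1}(\R^{n})}\le1$ but $\dive F\neq0$. This suffices, because the unit ball of $X=L^{1}(\R^{n};\R^{n})$ is $\tau_m$-closed: from $\tau_m$-convergence together with $\norm{F_{k}}_{L^{1}}\le1$ one extracts, diagonally over balls, an a.e.-convergent subsequence, and Fatou's lemma gives $\norm{F}_{L^{1}}\le1$. Such an $F$ then lies in the unit ball of $X$ but not in $N$, so the unit ball of $N$ is not $\tau_m$-closed. The underlying phenomenon is a loss of uniform integrability: $\tau_m$-convergence plus an $L^{1}$-bound does not force convergence of the divergences in $\mathscr D'(\R^{n})$, so divergence-free fields may converge in measure to a field whose divergence carries a Dirac mass. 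I treat $n=2$ first and then lift to general $n$.

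For $n=2$ I use that $\nabla^{\perp}\psi=(\p_{x_{2}}\psi,-\p_{x_{1}}\psi)$ is automatically divergence-free. Let $\theta$ be the determination of the polar angle which is continuous on $\R^{2}\setminus\{0\}$, smooth off $\Gamma=(-\io,0]\times\{0\}$, valued in $(-\pi,\pi)$ there, so that $\nabla\theta=x^{\perp}/\val x^{2}$ (with $x^{\perp}=(-x_{2},x_{1})$) and $\nabla^{\perp}\theta=x/\val x^{2}$, both in $L^{1}_{loc}(\R^{2})$. Let $\gamma=\gamma(\val x)$ be a smooth radial cut-off with $\gamma\equiv1$ on $\{\val x\le1\}$ and $\supp\gamma\subset\{\val x\le2\}$, and let $\ep_{k}\downarrow0$. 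I set $\psi_{k}=\gamma(\val x)\,b_{k}(\theta)$, where $b_{k}$ is a $2\pi$-periodic Lipschitz function equal to the identity on $[-\pi+\ep_{k},\pi-\ep_{k}]$ and decreasing monotonically by $2\pi-2\ep_{k}$ across the complementary arc of width $2\ep_{k}$, so that $\val{b_{k}'}\approx\pi\ep_{k}^{-1}$ there. Then $b_{k}\circ\theta$ is bounded, continuous on $\R^{2}\setminus\{0\}$ and locally Lipschitz there; on $\{\val x\le1\}$ one has $\val{\nabla\psi_{k}}=\val{b_{k}'(\theta)}/\val x$, whose integral in polar coordinates equals $\int_{-\pi}^{\pi}\val{b_{k}'(\theta)}\,d\theta<4\pi$ (the collapsing arc contributing $\approx\pi\ep_{k}^{-1}\cdot2\ep_{k}=2\pi$), and with the elementary bound on $\{1<\val x<2\}$ this gives $C_{0}:=\sup_{k}\norm{\nabla\psi_{k}}_{L^{1}(\R^{2})}<\io$. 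Since $\{0\}$ is $W^{1,1}$-removable in the plane, $\psi_{k}\in W^{1,1}(\R^{2})$ (bounded, with gradient in $L^{1}$), so $F_{k}:=C_{0}^{-1}\nabla^{\perp}\psi_{k}$ belongs to $N$ and $\norm{F_{k}}_{L^{1}}\le1$. For $x\notin\Gamma\cup\{0\}$ and $k$ large, $b_{k}$ coincides with the identity near $\theta(x)$, so $F_{k}(x)$ stabilizes; thus $F_{k}\to F$ a.e. on $\R^{2}$ — in particular for $\tau_m$ — where $F$ is the $L^{1}$ field equal to $C_{0}^{-1}x/\val x^{2}$ on $\{\val x\le1\}$ and to a bounded field on $\{1<\val x<2\}$. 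Finally $F$ is not the distributional $\nabla^{\perp}$ of $\gamma\theta$ (which would belong to $N$): the two differ by a line distribution carried by $\Gamma$, whose curl has a non-zero Dirac mass at $0$ reflecting the multivaluedness of $\theta$. Concretely, taking $a(x)=\varphi(\val x)$ with $\varphi(0)=1$ and $\supp a\subset\{\val x<1/2\}$, integration in polar coordinates gives $\poscal{\dive F}{a}=-\int F\cdot\nabla a\,dx=2\pi C_{0}^{-1}\neq0$, so $F\notin N$. This proves the statement when $n=2$.

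For $n>2$ I lift this example. Choose $\eta\in\mooc(\R^{n-2})$ with $\eta\ge0$, $\int\eta=1$ and $\eta$ constant near $0$; write $x=(x_{1},x_{2},x')$ with $x'=(x_{3},\dots,x_{n})$; and set $\wt F_{k}=(F_{k,1}(x_{1},x_{2})\,\eta(x'),\,F_{k,2}(x_{1},x_{2})\,\eta(x'),\,0,\dots,0)$. Since $\eta$ does not depend on $x_{1},x_{2}$, we have $\dive\wt F_{k}=\eta(x')\bigl(\p_{x_{1}}F_{k,1}+\p_{x_{2}}F_{k,2}\bigr)=0$, so $\wt F_{k}\in N$; moreover $\norm{\wt F_{k}}_{L^{1}(\R^{n})}=\norm{F_{k}}_{L^{1}(\R^{2})}\le1$, and $\wt F_{k}\to\wt F$ a.e. (hence for $\tau_m$), $\wt F$ being built from $F$ in the same way. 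As $\dive\wt F=\eta(x')\,\dive F$ as a distribution on $\R^{n}$, pairing against $a(x_{1},x_{2})\,b(x')$ with $\int\eta b\neq0$ gives $\bigl(\int\eta b\bigr)\cdot2\pi C_{0}^{-1}\neq0$; hence $\wt F\notin N$ while $\norm{\wt F}_{L^{1}}\le1$, and the unit ball of $N$ is not $\tau_m$-closed.

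The main obstacle is the two-dimensional construction. One must arrange the competition between the height $\approx\pi\ep_{k}^{-1}$ of $\nabla\psi_{k}$ on the collapsing angular sector and the width $\approx\ep_{k}$ of that sector, so that $\norm{\nabla\psi_{k}}_{L^{1}}$ stays bounded while the fields $\nabla^{\perp}\psi_{k}$ still fail to be uniformly integrable near $0$ — so that they converge for $\tau_m$ but not in $L^{1}_{loc}$. Equally, one has to check that the a.e. limit $F$ is not the distributional $\nabla^{\perp}$ of $\gamma\theta$, i.e.\ that the Dirac mass in $\dive F$ genuinely survives the passage from pointwise to distributional derivatives and is not an artefact of differentiating a discontinuous branch of the angle.
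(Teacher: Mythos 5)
Your proof is correct, and it reaches the conclusion by a genuinely different, more self-contained route than the paper. Both arguments ultimately exhibit a $\Vert\cdot\Vert_1$-bounded sequence in $N$ whose $\tau_m$-limit is the absolutely continuous part of a singular divergence-free measure field, the singular part of which carries the divergence; but the paper gets there abstractly: it notes that $N$ is translation invariant, invokes Bocl\'e's differentiation lemma \cite{Bo} (through the proof of Lemma 1.5 in \cite{Riesz}) to the effect that a nicely placed translation-invariant space of measures is stable under the Radon--Nikodym projection, and then takes the vortex-sheet example $X=\nabla^{\perp}\bigl(\chi(x)H(x_1)\bigr)=V+W$ with $V\in L^1$, $W=-\chi\,\delta_0(x_1)\,\partial_{x_2}$ singular and $\dive V=-\dive W\neq 0$; the sequence in $N$ is simply $(X*c_k)$ for an approximate identity $(c_k)$, Bocl\'e's lemma killing $W*c_k$ in measure. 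You instead build the approximating sequence by hand from the point-vortex potential $\gamma(\vert x\vert)\,\theta(x)$, smearing the $2\pi$-jump of the angle over a sector of width $2\ep_k$; this makes the uniform $L^1$-bound ($\int\vert b_k'\vert\,d\theta<4\pi$) and the local convergence in measure (the exceptional set is a shrinking sector) completely explicit, and it avoids both Bocl\'e's lemma and the translation-invariance principle, at the modest price of the (standard, and correctly invoked) removability of a point for bounded functions with $L^1$ gradient in the plane, which is what guarantees that $\nabla^{\perp}\psi_k$ is divergence-free as a distribution, and of the direct computation $\poscal{\dive F}{a}=2\pi C_0^{-1}\neq 0$ identifying the Dirac mass in the limit. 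What the paper's route buys is brevity plus a more general structural fact (nicely placed translation-invariant subspaces are stable under the Radon--Nikodym projection, so the a.c.\ part of any divergence-free measure field would have to be divergence-free); what yours buys is elementarity and, incidentally, an explicit treatment of every $n>1$ by tensoring with a bump $\eta(x')$, whereas the paper only writes out the case $n=2$.
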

\begin{proof}
First observe that the space $N$ is translation invariant, and thus is stable under convolution with
integrable functions. If $N$ is nicely placed, it follows from Bocl\'e's differentiation lemma (\cite{Bo})
 that if a measure-valued vector field $X\in(\mathcal{M}({\mathbb R^n}))^n$ is divergence-free, then its absolutely continuous part is
 divergence-free as well. Indeed, Boclé's Lemma shows that if $(c_k)$ is an approximation of identity in the convolution algebra $L^1({\mathbb R^n})$
 and $\mu$ is a singular measure, then $(\mu*c_k)$ converges to $0$ in quasi-norm $\Vert~.~\Vert_p$ for all $0<p<1$, and it follows that a nicely placed
 translation-invariant space
 of measures is stable under the Radon-Nikodym projection (see the proof of Lemma 1.5 in \cite{Riesz}).
Let us provide an example of an unstable divergence-free vector field in the case $n=2$.
\par
Let $\chi\in C^{1}_{c}(\R^{2})$ be arbitrary, and $H=\mathbf 1_{\R_{+}}$. We consider the function $\psi\in L^{2}(\R^{2})$ defined by
$$
\psi(x_{1},x_{2})=\chi(x_{1},x_{2}) H(x_{1})
$$
The field 
$$
X=\nabla^{\perp}\psi=\frac{\p \psi}{\p x_{2}}\frac{\p}{\p x_{1}}-\frac{\p \psi}{\p x_{1}}\frac{\p}{\p x_{2}},
$$
is divergence-free. Moreover
\begin{multline*}
X=\frac{\p \chi}{\p x_{2}} H(x_{1})\frac{\p}{\p x_{1}}-\Bigl(
\frac{\p \chi}{\p x_{1}} H(x_{1})+\chi \delta_{0}(x_{1})
\Bigr)\frac{\p}{\p x_{2}}
\\=
\underbrace{\frac{\p \chi}{\p x_{2}} H(x_{1})\frac{\p}{\p x_{1}}-
\frac{\p \chi}{\p x_{1}} H(x_{1})\frac{\p}{\p x_{2}}}_{V}
\underbrace{-\chi \delta_{0}(x_{1})\frac{\p}{\p x_{2}}}_{W},
\end{multline*}
The field $V$ takes its values into  $L^{1}$, the field $W$ is singular and
$$
\text{div} (V+W)=0,\qquad
\text{div } W=-\frac{\p \chi}{\p x_{2}}\delta_{0}(x_{1})\not=0,
$$
as soon as $\frac{\p \chi}{\p x_{2}}(0,x_{2})$ does not vanish identically. This concludes the proof, and actually shows that the vector field $V\not\in N$
is the limit of the $\Vert~.~\Vert_1$-bounded sequence  $(X*c_k)\subset N$ for the topology of local convergence in measure.
\end{proof}
\vskip 3 mm
We recall that a Banach space $Z$ has property $(X)$ if every $z^{**}\in Z^{**}$ such that $z^{**}(weak^*-\sum x^*_k)=\sum z^{**}(x^*_k)$ for every weakly
unconditionally convergent series $(x^*_k)$ actually belongs to $Z$ (see p. 147 in  \cite{HWW}). In other words, property $(X)$ means that elements of $Z$ are 
those elements of $Z^{**}$ which are somehow $\sigma$-additive. If $Z$ has $(X)$, then $Z$ is strongly unique isometric predual for every equivalent norm,
and is weakly sequentially complete (\cite{GT}). Moreover, every space which is $L$-complemented in its bidual has $(X)$ (\cite{Pfitzner}). 
However, the following question seems to be open.
\vskip 3 mm
{\bf Problem}: Assume $n>1$. Does the Banach space $\mathcal F(\mathbb R^n)$ enjoy Property $(X)$ ?
\vs\vs
\section{Closed subspaces of \texorpdfstring{$L^1$}{L1} consisting of continuous functions on a star-shaped domain.}
When $X$ is a nicely placed subspace of $L^1$, a distinguished subspace of $X^*$ is a candidate for being 
the natural predual of $X$. We denote (see Definition IV.3.8 in \cite{HWW}):
$$
X^{\sharp}=\{x^*\in X^*;~x^*~is~\tau_m-continuous~on~B_X\}.
$$
\vskip 3 mm
The following proposition is valid in any separable $L^1$-space, and requests no topology on the measure space.
\vskip 3 mm
\begin{pro}
Let $X$ be a closed subspace of $L^1(m)$. Let $(T_n)$ be a sequence of bounded linear operators from $L^1(m)$
to itself such that $\lim_n\Vert T_n(f) - f\Vert_1=0$ for every $f\in L^1$. We assume that:

\item $(1)$ $T_n(X)\subset X$ for every $n\geq 1$, and the restriction of $T_n$ to $X$ is a weakly compact operator.

\item $(2)$ $T_n$ is $(\tau_m- \tau_m)$-continuous on $\Vert~.~\Vert_1$-bounded subsets of $L^1$ for every $n\geq 1$.

Then the space $X$ is nicely placed and is isometric to the dual $(X^{\sharp})^*$ of the space $X^{\sharp}$.

\end{pro}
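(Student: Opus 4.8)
The plan is to prove the two conclusions separately; the first is routine, and the second comes down to showing that under the hypotheses $X$ is in fact an $L$-summand in its bidual, the $L$-projection being the ``absolutely continuous part'' map.

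\emph{Nice placedness, and $T_n^{*}(X^{*})\subset X^{\sharp}$.} Since $L^{1}(m)$ is separable, $\tau_m$ is metrizable, so I work with sequences. Suppose $f_k\to f$ in $\tau_m$ with $(f_k)\subset B_X$; along a subsequence $f_k\to f$ a.e., so $\|f\|_1\le 1$ by Fatou. Fix $n$. The sequence $(T_nf_k)_k$ lies in $T_n(B_X)$, which is relatively weakly compact by $(1)$, hence uniformly integrable by the Dunford--Pettis theorem; by $(2)$ it converges to $T_nf$ in $\tau_m$. Uniform integrability plus convergence in measure forces $\|T_nf_k-T_nf\|_1\to 0$; as $T_nf_k\in X$ and $X$ is closed, $T_nf\in X$, and letting $n\to\infty$ gives $f\in X$, so $f\in B_X$: $X$ is nicely placed. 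The same computation shows that for $x^{*}\in X^{*}$ one has $\langle T_n^{*}x^{*},f_k\rangle=\langle x^{*},T_nf_k\rangle\to\langle x^{*},T_nf\rangle$ whenever $f_k\to f$ in $\tau_m$ inside $B_X$; hence $T_n^{*}x^{*}\in X^{\sharp}$. Since $T_n^{*}x^{*}\to x^{*}$ in $\sigma(X^{*},X)$, the (norm-closed) subspace $X^{\sharp}$ is weak$^{*}$-dense in $X^{*}$, in particular separating over $X$, so the canonical norm-nonincreasing map $j\colon X\to(X^{\sharp})^{*}$, $\langle j(x),x^{*}\rangle=\langle x^{*},x\rangle$, is injective.

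\emph{$X$ is an $L$-summand in $X^{**}$.} Write $(L^{1})^{**}=(L^{\infty})^{*}=L^{1}\oplus_{1}L^{1}_{s}$ for the Yosida--Hewitt decomposition, with $L$-projection $\Pi$ onto $L^{1}$; $T_n^{**}$ is weak$^{*}$-continuous on $(L^{1})^{**}$ and extends $T_n$. By Bocl\'e's lemma every purely singular $\sigma\in L^{1}_{s}$ is the weak$^{*}$-limit of a $\|\cdot\|_1$-bounded sequence $(g_k)\subset L^{1}$ with $g_k\to 0$ in $\tau_m$ (convergence in $L^{p}$-quasinorm, $p<1$, being the same as $\tau_m$-convergence on finite-measure sets); then $T_ng_k\to 0$ in $\tau_m$ by $(2)$ while $T_ng_k\to T_n^{**}\sigma$ weak$^{*}$, and the converse half of Bocl\'e's lemma shows $T_n^{**}\sigma$ is again purely singular. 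On the other hand, for $\Psi\in X^{**}\subset(L^{1})^{**}$ the weak compactness of $T_n|_X$ and Gantmacher's theorem give $T_n^{**}\Psi\in X\subset L^{1}$; decomposing $\Psi=\Pi\Psi+(\Psi-\Pi\Psi)$ we find that $T_n^{**}(\Psi-\Pi\Psi)$ is at once purely singular and absolutely continuous, hence $0$, so $T_n^{**}\Psi=T_n(\Pi\Psi)\to\Pi\Psi$ in $\|\cdot\|_1$ and $\Pi\Psi\in\overline X=X$. Thus $\Pi$ restricts to a norm-one projection of $X^{**}$ onto $X$, and since it is an $L$-projection on $(L^{1})^{**}$ we obtain an $L$-decomposition $X^{**}=X\oplus_{1}X_s$ with $X_s:=(\mathrm{Id}-\Pi)X^{**}$, together with $T_n^{**}\to\Pi$ strongly on $X^{**}$.

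\emph{Identifying $(X^{\sharp})^{*}$ with $X$.} Put $M:=\{\Psi\in X^{**}:\langle\Psi,x^{*}\rangle=0\ \text{for all }x^{*}\in X^{\sharp}\}$; the restriction map $q\colon X^{**}\to(X^{\sharp})^{*}$ is a metric surjection with kernel $M$, and $j=q\circ\kappa_X$. If $\Psi\in M$ then $\langle\Pi\Psi,x^{*}\rangle=\lim_n\langle T_n^{**}\Psi,x^{*}\rangle=\lim_n\langle\Psi,T_n^{*}x^{*}\rangle=0$ for every $x^{*}\in X^{*}$ (since $T_n^{*}x^{*}\in X^{\sharp}$), so $\Pi\Psi=0$ and $M\subset X_s$. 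Conversely, because $X$ is nicely placed and $L$-embedded, every $\Psi\in X_s$ is the weak$^{*}$-limit of a $\|\cdot\|_1$-bounded net $(y_\gamma)\subset X$ with $y_\gamma\to 0$ in $\tau_m$, whence $\langle\Psi,x^{*}\rangle=\lim_\gamma\langle y_\gamma,x^{*}\rangle=0$ for $x^{*}\in X^{\sharp}$ (which are $\tau_m$-continuous on bounded subsets of $X$); so $X_s\subset M$. Therefore $M=X_s$, $(X^{\sharp})^{*}=X^{**}/M=X^{**}/X_s$, and $j$ is a bijection onto $(X^{\sharp})^{*}$; it is isometric because in the $L$-decomposition $\|\kappa_X(x)-s\|=\|x\|+\|s\|$ for $s\in X_s$, so $\|j(x)\|=\dist(\kappa_X(x),X_s)=\|x\|$. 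Hence $X$ is isometric to $(X^{\sharp})^{*}$.

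\emph{The main obstacle.} The ``soft'' inputs (Dunford--Pettis, Gantmacher, Bocl\'e) make everything above essentially mechanical except the inclusion $X_s\subset M$ --- equivalently, the statement that every purely singular functional in $X^{**}$ is a weak$^{*}$-limit of a norm-bounded net of elements of $X$ tending to $0$ in measure. This is precisely the point at which nice placedness is used in an essential way; it is the heart of the structure theory of nicely placed subspaces of $L^{1}$ (see Chapter IV of \cite{HWW}), and it is the one genuinely non-formal step of the proof.
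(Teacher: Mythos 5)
Your first paragraph is sound and is precisely the paper's own argument: weak compactness of $T_n|_X$ plus $(\tau_m$-$\tau_m)$-continuity turns $\tau_m$-convergence in $B_X$ into norm convergence of $(T_nf_k)$, giving nice placedness, and $T_n^*x^*\in X^{\sharp}$ shows $X^{\sharp}$ separates $X$. At exactly that point the paper stops and invokes Theorem 1.3 of \cite{MathScand} to conclude $X=(X^{\sharp})^*$ (and that $X^{\sharp}$ is an $M$-ideal in its bidual). Your attempt to reprove that duality through the Yosida--Hewitt decomposition contains a false step. The claim that every purely singular $\sigma\in L^1_s$ is the weak$^*$ limit of a norm-bounded \emph{sequence} $(g_k)\subset L^1$ with $g_k\to 0$ in $\tau_m$ cannot hold: such a sequence is weakly Cauchy in $L^1$ (test against each $h\in L^{\infty}$), and $L^1$ is weakly sequentially complete, so its weak$^*$ limit already lies in $L^1$; only $\sigma=0$ arises this way. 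Passing to nets does not repair the step, because the ``converse half'' you then need --- a bounded net tending to $0$ in $\tau_m$ has purely singular weak$^*$ limit --- is false: on $[0,1]$, let $E_{N,M}$ be the union of $M$ evenly spaced intervals of length $1/(NM)$; for fixed $N$ the functions $N\mathbf 1_{E_{N,M}}$ converge weakly to the constant function $\mathbf 1$ as $M\to\infty$, and a suitable net of them converges weak$^*$ to $\mathbf 1$ while converging to $0$ in measure. (The correct statement is sequential, via uniform integrability and subsequence splitting, but then one only controls weak$^*$ \emph{cluster points}, not limits, which your argument needs.) Consequently the assertion that $T_n^{**}$ preserves $L^1_s$, and with it your derivation of $X^{**}=X\oplus_1 X_s$, is unproven; note also that this middle step is superfluous, since nice placedness alone already yields $\Pi(X^{\perp\perp})\subset X$ by the standard theory (Chapter IV of \cite{HWW}).

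More importantly, the inclusion $X_s\subset M$, which you yourself single out as the decisive point, is not proved: you appeal to ``the structure theory of nicely placed subspaces'' in exactly the form (every element of $X_s$ is the weak$^*$ limit of a bounded net in $X$ tending to $0$ in $\tau_m$) whose content is essentially the duality theorem being established --- it is what Theorem 1.3 of \cite{MathScand} packages, and what the paper cites. So, as written, your proposal coincides with the paper's proof up to the separation of $X$ by $X^{\sharp}$, and beyond that point it consists of one genuinely false approximation claim and, at the crux, a placeholder rather than an argument. To make it self-contained you would have to reproduce the Godefroy--Li analysis (subsequence splitting, $\tau_m$-lower semicontinuity of the norm on $B_X$, and the identification of $(X^{\sharp})_{\perp}$ inside $X^{**}$ with the singular part), or else simply cite Theorem 1.3 of \cite{MathScand} as the paper does.
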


\begin{proof} Let $(f_k)$ be a sequence in $B_X$, which $\tau_m$-converges to $g\in L^1$. By $(2)$, for every $n$ the sequence
$(T_n(f_k))_k$ is $\tau_m$-convergent to $T_{n}(g)$. Since by $(1)$ this sequence is weakly relatively compact in $L^1$, we have

$$\lim_k \Vert T_n( f_k - g)\Vert_1=0$$

\noindent and thus $T_n(g)\in X$ for every $n$. But since $(T_n)$ is an approximating sequence it follows that $g\in X$ and thus $X$ is nicely placed.

Note now that for any $h\in L^{\infty}$ and any $n$, the restriction of $T_n^*(h)$ to $X$ is $\tau_m$-continuous on the unit ball of $X$, that is, belongs to $X^{\sharp}$.
If follows that $X^{\sharp}$ separates $X$, and thus by Theorem 1.3 in \cite{MathScand} the space $X^{\sharp}$ is an isometric predual of $X$, and moreover it is
an $M$-ideal in its bidual $X^*$.
\end{proof}
\vs
The following theorem is the main application of Proposition 8. If $\Omega$ is a star-shaped open subset of $\mathbb R^n$, $\rho\in (0, 1)$ and $f$ is any function defined on $\Omega$,
we denote $T_\rho(f)(x)=f(\rho x)$ for every $x\in\Omega$. We equip $\Omega$ with the topology induced by $\mathbb R^n$ and with the Lebesgue measure. We denote by $\tau_K$ the compact-open topology on the space $\mathcal{C}(\Omega)$, that is, the topology of uniform convergence on compact subsets of $\Omega$. With this notation, the following holds:
\vs
\begin{theorem} Let $\Omega$ be a star-shaped bounded open subset of $\mathbb R^n$, and let $X$ be a closed vector subspace of  $L^1(\Omega)$. We assume that $X\subset\mathcal{C}(\Omega)$, and that $T_\rho(X)\subset X$ for every $\rho\in(0, 1)$. Then the closed unit ball $B_X=\{f\in X;~\Vert f\Vert_1\leq 1\}$ of $X$ is $\tau_K$-compact and the topologies $\tau_K$ and 
$\tau_m$ coincide on $B_X$.
\end{theorem}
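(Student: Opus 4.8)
The plan is to obtain nice-placedness of $X$ from Proposition 8 applied to the dilation operators $(T_\rho)$, and then to upgrade the resulting $\tau_m$-closedness of $B_X$ into $\tau_K$-compactness by an Arzel\`a--Ascoli argument whose decisive input is the local equicontinuity of $B_X$. After an affine change of variables we may assume $0$ is a centre of star-shapedness, so $0\in\Omega$, $B(0,r_0)\subseteq\Omega\subseteq B(0,R)$ for suitable $0<r_0<R$, and $\rho\Omega\subseteq\Omega$ for all $\rho\in(0,1)$; since $\Omega$ is open, $(\rho\Omega)_{\rho<1}$ is increasing with union $\Omega$, so every compact $K\subseteq\Omega$ lies in $\rho\Omega$ for some $\rho=\rho(K)<1$. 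A change of variables gives $\norm{T_\rho f}_{L^1}\le\rho^{-n}\norm{f}_{L^1}$, so $T_\rho\in\mathcal L\bigl(L^1(\Omega)\bigr)$, and $\norm{T_\rho f-f}_{L^1}\to0$ as $\rho\to1^-$ (check it on the dense subspace $\mooc(\Omega)$ and use the bound, valid for $\rho\ge\tfrac12$). Fix $\rho_k\uparrow1$ and set $T_k=T_{\rho_k}$.

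First I would record a closed graph estimate. For $K=\overline{U}$ with $U$ open and $\overline{U}\subseteq\Omega$, the restriction $X\to\mathcal C(K)$, $f\mapsto f|_K$, has closed graph (an $L^1$-convergent sequence has an a.e.\ convergent subsequence, and a.e.\ equality of continuous functions on $U$ forces equality on $\overline{U}$); any compact subset of $\Omega$ lies inside such a set, so the closed graph theorem furnishes, for every compact $K\subseteq\Omega$, a constant $C_K$ with $\sup_{x\in K}|f(x)|\le C_K\norm{f}_{L^1}$ for all $f\in X$. Hence every evaluation $\delta_x$ is bounded on $X$, the family $B_X$ is uniformly bounded on each compact subset of $\Omega$, and the inclusion $X\hookrightarrow(\mathcal C(\Omega),\tau_K)$ is continuous; in particular on $B_X$ the topology $\tau_K$ is finer than $\tau_m$ (uniform convergence on compacta implies convergence in measure, since $m(\Omega)<\infty$ and Lebesgue measure is inner regular).

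Next I would verify the hypotheses of Proposition 8 for $(T_k)$. Condition $(2)$ is a change of variables: $m\bigl(\{|T_\rho f_j-T_\rho f|>\delta\}\bigr)\le\rho^{-n}\,m\bigl(\{|f_j-f|>\delta\}\bigr)$, so each $T_\rho$ is $(\tau_m,\tau_m)$-continuous. Condition $(1)$ requires that $T_k|_X$ be weakly compact, i.e.\ (finite measure, Dunford--Pettis) that $T_k(B_X)$ be uniformly integrable; substituting $y=\rho_k x$, this means that $\{f|_{\rho_k\Omega}:f\in B_X\}$ is uniformly integrable. This is the first point at which the assumption $X\subseteq\mathcal C(\Omega)$ is essential: one must control the functions of $B_X$, uniformly, on the relevant inner region. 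When $\rho$ is small enough that $\overline{\rho\Omega}\subseteq\Omega$ this is immediate — the closed graph estimate gives $\sup_{f\in B_X}\norm{f}_{L^\infty(\overline{\rho\Omega})}\le C_{\overline{\rho\Omega}}$, and uniform boundedness on a set of finite measure is uniform integrability — while the case of $\rho$ close to $1$ is more delicate and is part of the technical core (one uses the dilation-invariance of $X$, possibly passing to radial averages of the $T_\rho$). Granting the weak compactness, Proposition 8 yields that $X$ is nicely placed, so $B_X$ is $\tau_m$-closed in $L^1(\Omega)$, and that $X$ is isometric to $(X^\sharp)^*$.

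It remains to show $B_X$ is $\tau_K$-compact, and the main obstacle is the local equicontinuity of $B_X$. Given a compact $K\subseteq\Omega$, choose $\rho_0<1$ with $K\subseteq\rho_0\Omega$ and set $\widetilde{K}=\rho_0^{-1}K\subseteq\Omega$ (compact); then $f|_K=(T_{\rho_0}f)|_{\widetilde{K}}\circ h$ where $h\colon K\to\widetilde{K}$, $x\mapsto\rho_0^{-1}x$ is a homeomorphism, so the restriction $R_K\colon X\to\mathcal C(K)$ factors as $R_K=D\circ R_{\widetilde{K}}\circ T_{\rho_0}$ with $D$ an isometric isomorphism for the sup-norms. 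Using the weak compactness of $T_{\rho_0}|_X$ together with the structure of $L^1$ — via a Dunford--Pettis--Schauder argument making a suitable power of the dilation a genuinely compact operator on $X$ — one concludes that $R_K$ is compact, so $R_K(B_X)=\{f|_K:f\in B_X\}$ is relatively compact in $\mathcal C(K)$, hence equicontinuous by Arzel\`a--Ascoli. Combining local uniform boundedness with local equicontinuity and applying Arzel\`a--Ascoli to $\mathcal C(\Omega)$ (legitimate, $\Omega$ being locally compact and $\sigma$-compact) shows $B_X$ is relatively $\tau_K$-compact; moreover $B_X$ is $\tau_K$-closed, since a $\tau_K$-limit $h$ of a net in $B_X$ satisfies $\norm{h}_{L^1}\le1$ by Fatou along compacts and is the $\tau_m$-limit of that net, hence lies in the $\tau_m$-closed set $B_X$. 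Therefore $B_X$ is $\tau_K$-compact, and the identity $(B_X,\tau_K)\to(B_X,\tau_m)$ — a continuous bijection from a compact space onto a Hausdorff space — is a homeomorphism, which gives the coincidence of $\tau_K$ and $\tau_m$ on $B_X$. The two delicate points, and the only ones genuinely exploiting $X\subseteq\mathcal C(\Omega)$, are the weak compactness of the dilations on $X$ and the ensuing equicontinuity of $B_X$; the rest is soft.
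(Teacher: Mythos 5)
Your overall route is the paper's: a closed-graph bound $\sup_{K}\vert f\vert\le C_K\Vert f\Vert_1$ on compacta, the dilations $T_\rho$ fed into Proposition 8 to make $B_X$ $\tau_m$-closed, weak compactness of the dilations upgraded to compactness of the restrictions to compacta, and the final compact-to-Hausdorff bijection argument (this endgame is literally the paper's). But as written the proposal has a genuine gap at the load-bearing hypothesis: condition $(1)$ of Proposition 8, the weak compactness of $T_\rho\vert_X$, is proved only when $\overline{\rho\Omega}\subset\Omega$, and for $\rho$ close to $1$ you explicitly write ``granting the weak compactness'', offering only a vague hint about radial averages. Since Proposition 8 requires a sequence $T_{\rho_k}$ with $\rho_k\to 1$ (otherwise $\Vert T_{\rho_k}f-f\Vert_1\to 0$ fails), the case you defer is precisely the content of the theorem, and deferring it means the theorem is not proved. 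The paper treats every $\rho\in(0,1)$ at one stroke: working with $K=\overline\Omega$, it uses that $T_\rho(B_X)$ is uniformly bounded (your own closed-graph estimate applied to the dilated closure), so Dunford--Pettis gives weak relative compactness in $L^1(\Omega)$ for each $\rho$; no averaging enters. You are right that for a pathological star-shaped $\Omega$ the inclusion $\rho\overline\Omega\subset\Omega$ can fail for $\rho$ near $1$ -- a point the paper silently glosses over -- but your text neither proves that delicate case nor isolates the geometric hypothesis under which it disappears; flagging a difficulty and then granting it is not an argument.

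The second missing piece is the equicontinuity step. You factor $R_K=D\circ R_{\widetilde K}\circ T_{\rho_0}$, which is bounded composed with weakly compact, hence a priori only weakly compact, and then invoke an unexecuted ``Dunford--Pettis--Schauder argument making a suitable power of the dilation a genuinely compact operator'' to conclude that $R_K$ is compact. The paper's proof shows what actually has to be checked: $T_\rho(B_X)$ weakly relatively compact in $L^1$ plus boundedness of $T_\rho:X\to\mathcal C(\overline\Omega)$ give $T_{\rho^2}(B_X)$ weakly relatively compact in $\mathcal C(\overline\Omega)$, hence pointwise relatively compact and uniformly bounded; dominated convergence then yields $\Vert\cdot\Vert_1$-relative compactness of $T_{\rho^2}(B_X)$, and one more dilation yields sup-norm relative compactness of $T_{\rho^3}(B_X)$, after which letting $\rho\to1$ exhausts the compacta. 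In your language: by your closed-graph bound the evaluations lie in $X^*$, so weak convergence in $X$ implies pointwise convergence, hence (again via the bound on a compactly contained dilate and dominated convergence) $T_\rho\vert_X$ is completely continuous and the square of a dilation is norm-compact. This is a few lines away from what you already set up, but it is not in the proposal; the two points you yourself label ``the technical core'' are exactly the ones left unproven.
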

\begin{proof} 
For showing this, pick any $\rho\in(0, 1)$. We denote by $K=\overline{\Omega}$ the closure of $\Omega$ in $\mathbb {R}^n$, which is compact since $\Omega$ is bounded. The set $T_\rho(B_X)$ is weakly relatively compact in $L^1(\Omega)$, hence $T_{\rho^2}(B_X)$ is weakly relatively compact in $\mathcal{C}(K)$, and thus pointwise (on K) relatively compact in $\mathcal{C}(K)$. Since $T_{\rho^2}(B_X)$ is also weakly relatively compact in $L^1(\Omega)$, by Lebesgue's dominated convergence theorem it is $\Vert~.~\Vert_1$- relatively compact in $L^1(\Omega)$, and therefore $T_{\rho^3}(B_X)$ is $\Vert~.~\Vert_{\infty}$-relatively compact in $\mathcal{C}(K)$. Since $\rho\in(0, 1)$ was arbitrary, it follows that $B_X$ is relatively compact in $\mathcal{C}(\Omega)$ for the compact-open topology $\tau_K$. If we let $T_{(n-1)/n}=T_n$ for convenience, we can apply Proposition 8 with the same notation, and conclude that $B_X$ is $\tau_m$-closed in $L^1$. Note now that any $\tau_K$-convergent sequence in $B_X$ is $\tau_m$-convergent, and it follows that its limit belongs to $B_X$ since $B_X$ is $\tau_m$-closed in $L^1$. Therefore $B_X$ is $\tau_K$-compact. Finally, compactness shows that the topologies $\tau_K$ and $\tau_m$ coincide on $B_X$.
\end{proof}

The motivation for this result is that it implies that the unit ball $B_X$ of $X$ is $\tau_m$-compact locally convex, since $\tau_K$ is locally convex. Such subspaces of $L^1$ have been previously studied in some detail (\cite{GKL}, \cite{GKL2}). It can be shown in particular that, under the mild assumption that they enjoy Grothendieck's approximation property, they yield to a satisfactory unconditional decomposition. The precise statement is given below, in the special case considered in Theorem 9. Note that it has been shown by W. B. Johnson and M. Zippin \cite{JZ} that every quotient of $c_0$ is isomorphic to a subspace of $c_0$, hence $(2)$ below actually improves on $(1)$. Observe that $(2)$ implies that $X$ is arbitrarily close to weak*-closed subspaces of $l^1$.

\begin{cor} Let $\Omega$ be a star-shaped bounded open subset of $\mathbb R^n$, and let $X$ be a closed vector subspace of  $L^1(\Omega)$. We assume that $X\subset\mathcal{C}(\Omega)$, and that $T_\rho(X)\subset X$ for every $\rho\in(0, 1)$.  Then $X=(X^{\sharp})^*$ isometrically, where $X^{\sharp}$ denotes the subspace of $X^*$ consisting of the linear forms which are $\tau_K$-continuous on $B_X$. Moreover:
\vskip 2 mm
\item $(1)$ for any $\epsilon>0$, there exists a subspace $E_\epsilon$ of $c_0$ such that $d_{BM}(X^{\sharp}, E_\epsilon) < 1+\epsilon$. 
\vskip 2 mm
\item $(2)$ If $X$ has Grothendieck's approximation property, then for any $\epsilon>0$ there is a quotient space $Y_\epsilon$ of $c_0$ such that $d_{BM}(X^{\sharp}, Y_\epsilon) < 1+\epsilon$. Moreover there exists a sequence of finite rank operators $(A_i)$ on $X^{\sharp}$ such that  
\vskip 2 mm
$(\textrm{a})$ $\sup_{N, \vert\epsilon_i\vert= 1} \Vert \sum_{i=1}^N \epsilon_i A_i \Vert < 1+\epsilon$.
\vskip 2 mm
$(\textrm{b})$ for every $f\in X$, one has $f=\sum_{i=1}^{\infty} A_i^*(f)$, where the series is norm-convergent.
\end{cor}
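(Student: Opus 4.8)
The plan is to use Theorem 9 to land inside the class of subspaces of $L^1$ whose unit ball is $\tau_m$-compact and locally convex, and then to invoke the structure theory of such spaces developed in \cite{GKL} and \cite{GKL2}. First I would note that $\Omega$, being a bounded open subset of $\R^n$, is $\sigma$-compact, so $\tau_K$ is metrizable on $\mathcal C(\Omega)$ and $\tau_m$ is metrizable on $\Vert\cdot\Vert_1$-bounded subsets of $L^1(\Omega)$. By Theorem 9, $B_X$ is $\tau_K$-compact and $\tau_K=\tau_m$ on $B_X$; hence $B_X$ is $\tau_m$-compact, locally convex (since $\tau_K$ is), and metrizable, and the two descriptions of $X^{\sharp}$ — as the $\tau_m$-continuous, resp.\ $\tau_K$-continuous, linear forms on $B_X$ — coincide. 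Applying Proposition 8 with $T_n=T_{(n-1)/n}$, exactly as in the proof of Theorem 9, gives that $X$ is nicely placed and $X=(X^{\sharp})^{*}$ isometrically, and the last lines of the proof of Proposition 8 give that $X^{\sharp}$ is an $M$-ideal in $X^{*}=(X^{\sharp})^{**}$, i.e.\ $X^{\sharp}$ is $M$-embedded. This settles the first assertion of the Corollary and verifies the hypotheses of \cite{GKL}, \cite{GKL2}.

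For $(1)$ I would quote \cite{GKL}: a separable subspace of $L^1$ whose unit ball is $\tau_m$-compact and locally convex embeds almost isometrically into $l^{1}$, and dually its unique predual $X^{\sharp}$ embeds $(1+\epsilon)$-isomorphically into $c_0$ for every $\epsilon>0$; its image is the required $E_\epsilon$. The idea behind that statement is to use the Kadec--Pe\l czy\'nski dichotomy (subsequence splitting) in $L^1$ together with the $\tau_m$-compactness of $B_X$ to extract a sequence $(x_k)$ in $(1+\epsilon)B_X$ which is $\tau_m$-null and $(1-\epsilon)$-norming over $X^{\sharp}$, and then to set $X^{\sharp}\ni\varphi\mapsto(\langle\varphi,x_k\rangle)_{k\ge1}\in c_0$: this lands in $c_0$ because $x_k\to0$ in $\tau_m$ and each $\varphi$ is $\tau_m$-continuous on bounded sets, and it is an almost isometry by the norming estimate. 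It is exactly here that being in $L^1$ with the topology $\tau_m$ is essential: a general compact, convex, metrizable, symmetric body need not be the closed convex hull of a null sequence (for instance $B_{l^2}$ is not), so this is not a soft fact about locally convex topologies.

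For $(2)$, assume $X$ has Grothendieck's approximation property. Since $X=(X^{\sharp})^{*}$ and a space whose dual has the approximation property has it itself, $X^{\sharp}$ has the approximation property, so $X^{\sharp}$ is a separable $M$-embedded space with the approximation property that embeds almost isometrically into $c_0$. The structure theory of \cite{GKL}, \cite{GKL2} then provides a shrinking finite-dimensional decomposition of $X^{\sharp}$ which can be taken $(1+\epsilon)$-unconditional — this is where the ``homogeneity implies unconditionality'' principle of Kalton and coauthors is used, the homogeneity being the commuting semigroup $(T_\rho)_{\rho\in(0,1)}$ of compact dilation operators that approximate the identity on $X$ (their adjoints do the same on $X^{\sharp}$). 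Taking $(A_i)$ to be the coordinate projections of this FDD, these are finite rank operators on $X^{\sharp}$ with $\sup_{N,\vert\epsilon_i\vert=1}\Vert\sum_{i=1}^{N}\epsilon_iA_i\Vert<1+\epsilon$, which is $(\textrm a)$; passing to adjoints, $(A_i^{*})$ is the dual FDD of $X=(X^{\sharp})^{*}$, and shrinkingness of the FDD of $X^{\sharp}$ makes it decompose $X$, so $f=\sum_{i\ge1}A_i^{*}(f)$ norm-convergently for every $f\in X$, which is $(\textrm b)$. The same shrinking $(1+\epsilon)$-unconditional FDD presents $X^{\sharp}$ as a $(1+\epsilon)$-quotient of $c_0$ — equivalently, the $l^{1}$-embedding of $X$ may be taken weak$^{*}$-to-weak$^{*}$ — which is the space $Y_\epsilon$; and the Johnson--Zippin theorem \cite{JZ} that quotients of $c_0$ embed into $c_0$ shows that $(2)$ improves on $(1)$, as announced before the statement.

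The hard part will be nothing in the reduction — it is immediate from Theorem 9 and Proposition 8 — but rather the two ingredients imported from \cite{GKL}, \cite{GKL2}: producing the almost-isometric embedding of $X^{\sharp}$ into $c_0$ (genuinely an $L^1$ phenomenon, not a general fact about compact convex sets), and, more delicately, promoting the resulting finite-dimensional decomposition to a $(1+\epsilon)$-unconditional one by exploiting the dilation homogeneity. If one insisted on a self-contained treatment, these are the places where real work would be required; here the role of this section is to show, via Theorem 9, that the abstract hypotheses of that theory are met by the concrete spaces $X\subset\mathcal C(\Omega)$ at hand.
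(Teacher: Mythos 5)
Your reduction and part $(1)$ are essentially the paper's own argument: Theorem 9 gives that $B_X$ is $\tau_K$-compact with $\tau_K=\tau_m$ on $B_X$, Proposition 8 (applied with $T_n=T_{(n-1)/n}$) gives $X=(X^{\sharp})^*$ isometrically with $X^{\sharp}$ an $M$-ideal in its bidual, and $(1)$ is Proposition 2.1 of \cite{GKL}, exactly as in the paper. The genuine gap is in part $(2)$. You assert that the structure theory of \cite{GKL}, \cite{GKL2} provides a \emph{shrinking $(1+\epsilon)$-unconditional finite-dimensional decomposition} of $X^{\sharp}$ and take the $A_i$ to be its coordinate projections. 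What those references actually provide under the approximation property is the unconditional metric approximation property $(UMAP)$, i.e.\ an approximating sequence of finite rank operators whose differences satisfy the unconditionality estimate; this is strictly weaker than possessing an unconditional FDD (the operators need not be projections and their ranges need not form a decomposition), and no FDD statement is proved there --- note that the Corollary itself only claims $(\mathrm a)$ and $(\mathrm b)$, not an FDD. Your derivation of $Y_\epsilon$ from the alleged FDD is also invalid as stated: a shrinking $(1+\epsilon)$-unconditional FDD does not make a space a $(1+\epsilon)$-quotient of $c_0$ (for instance $\ell^2$ has a shrinking $1$-unconditional basis but is not a quotient of $c_0$, since quotients of $c_0$ embed into $c_0$ by \cite{JZ} and are therefore $c_0$-saturated); the quotient statement must come from Theorem 3.3 of \cite{GKL}, where the weak*-continuity of the embedding is established, not from a decomposition.

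For comparison, the paper's route for $(2)$ is: Theorem 3.3 of \cite{GKL} gives that $X$ has $(UMAP)$ and that $X^{\sharp}$ is arbitrarily close to quotients of $c_0$; since $X=(X^{\sharp})^*$ has $(UMAP)$ and $X^{\sharp}$ is an $M$-ideal in its bidual, hence a strict $u$-ideal, Theorem 9.2 of \cite{GKS} transfers $(UMAP)$ to $X^{\sharp}$; Theorem 3.8 of \cite{CK} then produces the finite rank operators $(A_i)$ on $X^{\sharp}$ satisfying $(\mathrm a)$ together with the weaker form of $(\mathrm b)$ in which $\sum_i A_i^*(f)$ converges only weak*; finally, norm convergence is obtained because this series is weakly unconditionally convergent and $X$, being a subspace of $L^1$, contains no isomorphic copy of $c_0$. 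Your substitute for this last step (``shrinkingness of the FDD'') rests entirely on the unjustified FDD, so as written $(2)$ is not proved. (Your remark that the approximation property of $X=(X^{\sharp})^*$ passes to $X^{\sharp}$ is correct, but by itself it does not yield the decomposition; the transfer the paper needs is of $(UMAP)$, via the strict $u$-ideal property, not of $(AP)$.)
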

\begin{proof} Since our assumptions imply that the unit ball $B_X$ of $X$ is $\tau_m$-compact locally convex, $X=(X^{\sharp})^*$ isometrically and $(1)$ follows from Proposition 2.1 in \cite{GKL}. If $X$ has the approximation property, it actually has the unconditional metric approximation property $(UMAP)$ and moreover its natural predual $X^{\sharp}$ is arbitrarily close to quotients of $c_0$ by Theorem 3.3 in \cite{GKL}.
\par
Since $X=(X^{\sharp})^*$ has $(UMAP)$ and $X^{\sharp}$ is an $M$-ideal in its bidual, and thus in particular a strict $u$-ideal, we may apply Theorem 9.2 in \cite{GKS} which shows in particular that $X^{\sharp}$ has $(UMAP)$. Now Theorem 3.8 in \cite{CK} shows the existence of a sequence $(A_i)$ of finite rank operators satisfying $(a)$ and a weaker version of $(b)$ where norm-convergence is replaced by weak*-convergence. But for every $f\in X$, the series $\sum_{i=1}^{\infty} A_i^*(f)$ is weakly unconditionally convergent, hence norm-convergent since $X$ does not contain $c_0$. This concludes the proof.
\end{proof}
\begin{oss}\rm The proof allows to state some more results. Indeed the unit ball $B_X$ is $\tau_m$-closed in $L^1(\Omega)$ and thus by \cite{G} the quotient space $L^1/X$ is weakly sequentially complete. Moreover, since this unit ball is even $\tau_m$-compact locally convex, the space $X$ satisfies by \cite{Kal} the following extension result: if $X\subset Y$ separable, any continuous linear operator from $X$ to a $\mathcal{C}(K)$-space $Z$ extends to a continuous linear operator from $Y$ to $Z$.
\end{oss}
\noindent
{\bf Examples.} 1) Proposition 8 trivially applies to any reflexive subspace $X$ of $L^1$, by taking $T_n= Id_{L^1}$ for all $n$. Note that in such a space $X$ the $\tau_m$-topology coincide with the norm topology, hence $X^{\sharp}=X^*$. This provides examples of spaces $X$ such that $(1)$ and $(2)$ hold true, but $T_n$ does not induce a compact operator on $X^*$- take any infinite dimensional reflexive space $X$.
\vskip 3 mm
2) The subspace $Har(\Omega)$ of $L^1(\Omega)$ which consists of harmonic functions satisfies the assumptions of Theorem 9. This is also the case for the Bergman space $L^1_a(\Omega)$ of integrable holomorphic functions on $\Omega$ star-shaped open bounded in ${\mathbb C}^n\simeq{\mathbb R}^{2n}$,. Actually, it is clear that many spaces of holomorphic functions on unit balls of ${\mathbb C}^n$ provide examples where Theorem 9 and Corollary 10 apply.
\vskip 3 mm
3) More generally, if $G:\Omega\rightarrow (L^{\infty}, weak^*)$ is a continuous function, the space

$$X_G=\{f\in L^1(\Omega);~f(x)=\int_{\Omega} f(\omega)G(x)(\omega)dm(\omega)~for~all~x\in\Omega\}$$

\noindent is  $\Vert~.~\Vert_1$-closed and consists of continuous functions. It follows that the space $X_{(G_i})=\bigcap_{i\in I} E_{G_i}$ is  $\Vert~.~\Vert_1$-closed and consists of continuous functions for an arbitrary collection of continuous maps $(G_i)$. When this space is moreover stable under the dilation operators $T_\rho$, Theorem 9 applies.
\vskip 3 mm
4) If $\Delta$ denotes the Laplace operator, a function $f$ is called biharmonic if $\Delta^{2}(f)=\Delta\circ\Delta(f)=0$. The space of biharmonic functions on $\Omega$ satisfies the assumptions of Theorem 9.
\vskip 3 mm
5) The sequence $(x_k(t)=2^k(t^{2^k}))_{k\geq 1}$ in $L^1([-1, 1])$ is equivalent to the unit vector basis of $l^1$ (\cite{GuMa}) and thus its closed linear span is contained in $\mathcal{C}((-1, 1))$ and is isomorphic to $l^1$. More generally, let $\Lambda=(\lambda_i)_{i\geq 1}$ be an increasing sequence of positive real numbers such that $\inf_i (\lambda_{i+1} - \lambda_i)>0$ and $\sum_{i\geq 1} \lambda_i^{-1}<+\infty$. The M\"untz space $M_1(\Lambda)$ is the closed linear span of the sequence $(t^{\lambda_i})_{i\geq 1}$ in $L^1([0, 1])$. Then $M_1(\Lambda)$ is contained in $\mathcal{C}([0, 1))$ (see \cite{GLu}) and Theorem 9 and Corollary 10 apply to the space $M_1(\Lambda)$. Note that in  \cite{G2} this result is shown using analyticity of the elements of $M_1(\Lambda)$ on $(0, 1]$ but actually continuity suffices as shown above. Also, the point $0$ does not belong to the interior of the unit interval but the reader will check that this causes no inconvenience in the above proofs. We refer to \cite{GaLef} for precise recent results on the geometry of M\" untz spaces. Let us also mention that M\"untz spaces of functions on the cube $[0, 1]^n$ have been investigated (see \cite{Hellerstein} and subsequent works), and Theorem 9 apply to such spaces as well.
\vskip 3 mm
6) Theorem 9 and Corollary 10 have an $L^p$-version for  $p>1$, and actually this version is rather easier since there is no need to enter the ``{Kalton zone}'' $0\leq p<1$ in this case. Recall that Theorem 4.4 in \cite{KW} states in particular that if $1<p<+\infty$, a subspace $X$ of $L^p$ whose unit ball is $\Vert~.~\Vert_1$-compact is arbitrarily close in Banach-Mazur distance to subspaces of $l^p$. Along the lines of the above proofs, it follows that if $1<p<+\infty$ and $X\subset L^p(\Omega)$ is a closed subspace which consists of continuous functions and such that $T_\rho(X)\subset X$ for every $\rho\in(0, 1)$, then for every $\epsilon>0$, there is a subspace $E_\epsilon\subset l_p$ such that $d_{BM}(X, E_\epsilon)<1+\epsilon$. Note that in this case, there is no need to assume any approximation property. 
\vskip 3 mm
7) It is interesting to compare the dilation operators with the approximation schemes from harmonic analysis. Let $\mathbb{T}$ be the unit circle equipped with the Haar measure, and $(\sigma_n)$ the sequence of Fej\'er kernels. If we let $T_n(f)= f*\sigma_n$, then of course $\lim\Vert T_n(f) - f\Vert_1=0$ for every $f\in L^1(\mathbb{T})$. Any translation invariant subspace $X=L^1_\Lambda(\mathbb{T})$ satisfies $T_n(X)\subset X$, and weak compactness is obvious since the $T_n$'s are finite rank operators. However, condition $(2)$ of Proposition 8 fails. Actually, no non-zero weakly compact operator on $L^1$ satisfies $(2)$, since the existence of $0\not=F\in L^{\infty}$ whose restriction to $B_{L^1}$ is $\tau_m$-continuous would follow and there is no such $F$. We now consider two specific examples of translation-invariant subspaces of $L^1(\mathbb{T})$.
\vskip 3 mm
Let $X= L^1_{\mathbb{N}}(\mathbb{T})=H^1(D)$ be the classical Hardy space on the unit disc, seen as a subspace of $L^1(\mathbb{T})$. Then $X$ is nicely placed and $X=(X^{\sharp})^*$ (\cite{G}) but the unit ball $B_X$ is not $\tau_m$-compact. Actually, every infinite-dimensional translation-invariant subspace $L^1_\Lambda(\mathbb{T})$ contains an isomorphic copy of $l^2$ (since $\Lambda$ contains an infinite Sidon set) and thus fails to have a $\tau_m$-relatively compact unit ball. The topology $\tau_m$ is strictly finer than the weak* topology associated with $X^{\sharp}=VMO$ on $B_X$. However, weak* convergent sequences admit subsequences {\sl whose Cesaro means} are $\tau_m$-convergent to the same limit (Corollary 4.3 in \cite{Riesz}). The operators $T_n$ are $(\tau_m-\tau_m)$- continuous on $B_X$, but not on $X$ (by the argument from Example 3.6(b) in \cite{HWW}).
\vskip 3 mm
Let $\Lambda=\bigcup_{n\geq 1}\{k.2^n;~0<\vert k\vert\leq n\}$ and let $X=L^1_\Lambda(\mathbb{T})$. It follows from the proof of Theorem III.1 in \cite{MathScand} that the restrictions of the operators $T_n$ to $B_X$ are $(\tau_m-\tau_m)$-continuous, but however the space $X$ is not nicely placed. This shows in particular that condition $(2)$ of Proposition 8 cannot be weakened: assuming $(\tau_m-\tau_m)$-continuity of the $(T_n)$'s on bounded subsets of $X$ does not suffice to reach the conclusion.

\vskip 5mm

Let us conclude this note with an open question:
\vskip 3 mm
{\bf Problem}: Let $\Omega$ be a star-shaped open subset of $\mathbb{R}^n$, and let $X\subset L^1(\Omega)$ be a Banach space which satisfies the assumptions of Theorem 9. Assume that $f\in X$ vanishes on a neighbourhood of $0$. Does it follow that $f=0$ ?

\end{document}